\newtheorem{theorem}{Theorem} 
\newtheorem*{conj*}{Conjecture}
\newtheorem{thm}{Theorem}[section] 
\newtheorem{lem}[thm]{Lemma}
\theoremstyle{definition}
\newtheorem{rem}[thm]{Remark}
\newtheorem{ex}[thm]{Example}
\newtheorem{problem}[theorem]{Problem}
\newtheorem{conjecture}[theorem]{Conjecture}
\theoremstyle{definition}
\theoremstyle{remark}
\DeclareMathOperator{\Irr}{Irr}
\DeclareMathOperator{\GL}{GL}
\DeclareMathOperator{\kernel}{Ker}
\newcommand{\Lin}{{\mathrm{ Lin}}}
\newcommand{\la}{\lambda}
\DeclareMathOperator{\Sym}{S}
\newcommand{\HH}{\mathcal{H}}
\newcommand{\Centralizer}{\mathbf{C}}
\newcommand{\Center}{\mathbf{Z}}
\def\norm#1#2{{\bf N}_{#1}(#2)}
\numberwithin{equation}{section}
\newcommand{\Alt}{{\mathrm {A}}}
\def\irr#1{{\rm Irr}(#1)}
\def\C{{\Bbb C}}
\begin{document}

\title[Common zeros of characters]{On common zeros of characters of finite groups}

\author[M. L. Lewis]{Mark L. Lewis}
\address{Department of Mathematical Sciences, Kent State University, Kent, OH 44266, USA}
\email{lewis@math.kent.edu}

\author[L. Morotti]{Lucia Morotti}
\address{Department of Mathematics, University of York, York, YO10 5DD, UK}
\email{lucia.morotti@uni-duesseldorf.de}

\author [E. Pacifici]{Emanuele Pacifici}
\address{Dipartimento di Matematica e Informatica U. Dini, Universit\`a degli Studi di Firenze, Viale Morgagni 67/A, 50134 Firenze, Italy.}
\email{emanuele.pacifici@unifi.it}

\author [L. Sanus]{Lucia Sanus}
\address{Departament de Matem\`atiques, 
Universitat de Val\`encia,
46100 Burjassot, Val\`encia, Spain. }
\email{lucia.sanus@uv.es}

\author[H. P. Tong-Viet]{Hung P. Tong-Viet}
\address{Department of Mathematics and Statistics, Binghamton University, Binghamton, NY 13902-6000, USA}
\email{htongvie@binghamton.edu}

\renewcommand{\shortauthors}{M. L. Lewis et al.}
\begin{abstract}  Let $G$ be a finite group, and let $\irr G$ denote the set of the irreducible complex characters of $G$. An element $g\in G$ is called a \emph{vanishing element} of $G$ if there exists $\chi\in\irr G$ such that $\chi(g)=0$ (i.e., $g$ is a \emph{zero} of $\chi$) and, in this case, the conjugacy class $g^G$ of $g$ in $G$ is called a \emph{vanishing conjugacy class}. In this paper we consider several problems concerning vanishing elements and vanishing conjugacy classes; in particular, we consider the problem of determining the least number of conjugacy classes of a finite group $G$ such that every non-linear $\chi\in\irr G$ vanishes on one of them. We also consider the related problem of determining the minimum number of non-linear irreducible characters of a group such that two of them have a common zero.
\end{abstract}

\thanks{While starting to work on this paper, the second author was working at the Mathematisches Institut of the Heinrich-Heine-Universit\"{a}t D\"usseldorf. Some of the work of the paper was completed during a visit by the first author to Universit\`a degli Studi di Firenze in April 2024 and by the fifth author to Kent State University in October 2023.  Those authors thank the respective universities for their hospitality. \\ The second author was supported by the Royal Society grant URF$\backslash$R$\backslash$221047. The third author is partially supported by InDaM-GNSAGA and by the italian project PRIN 2022-2022PSTWLB - Group Theory and Applications - CUP B53D23009410006. The research of the fourth author is supported by Ministerio de Ciencia e Innovación (PID2022-137612NB-I00 funded by MCIN/AEI/10.13039/501100011033 and `ERDF A way of making Europe') and Generalitat Valenciana CIAICO/2021/163. }

\subjclass[2010]{Primary 20C15; Secondary 20D06, 20D10}



\keywords{irreducible characters, zeros of characters, vanishing elements}

\maketitle

\section{Introduction}

In this paper, all groups are finite unless otherwise stated.  We particularly want to study the zeros of the irreducible characters of $G$.  For the fundamentals of character theory, we refer the reader to \cite{Isaacs}. 

Let $G$ be a group, and $\chi$ an irreducible complex character of $G$. An element $g\in G$ is called a \emph{zero} of $\chi$ if $\chi(g)=0$ and, in this case, we say that $\chi$ vanishes at $g$. The element $g$ is then said to be a \emph{vanishing element} of $G$ if some $\chi\in\irr G$ vanishes at $g$ (this never happens if $g$ lies in the center $\Center(G)$ of $G$) and, in this case, we say that its conjugacy class $g^G$ is a \emph{vanishing conjugacy class} of $G$.

The study of zeros of characters has a long history dating back to work of W. Burnside in 1903 and, since then, it has been the subject of a large number of researches in the literature (we refer the reader to the survey paper \cite{DPS}). The classical result of Burnside in this context states that if $\chi$ is a non-linear irreducible character of $G$ (i.e., an irreducible character whose degree is larger than $1$), then there exists $g\in G$ such that $\chi(g)=0$ (see \cite[Theorem~3.15]{Isaacs}).  Also, J. G. Thompson showed that for each non-linear irreducible character $\chi$ of $G$, more than one third of elements of $G$ satisfy that $\chi(g)=0$ or $\chi(g)$ is a root of unity (see \cite[Theorem~2, Chapter~21]{BZ}). 

Using the classification of finite nonabelian simple groups, G. Malle, G. Navarro and J. B. Olsson showed in \cite{MNO} that every non-linear irreducible character of $G$ vanishes at some prime-power order element. Moreover, given a prime number $p$, it was proved by S. Dolfi, P. Spiga, the third and the fourth authors that if no $p$-element of a group $G$ is vanishing, then $G$ has a normal Sylow $p$-subgroup (\cite{DPSS}); this generalizes the celebrated It\^{o}-Michler Theorem (more precisely, it generalizes the substantial part of it, which states that if $p$ does not divide the degree of any character in $\irr G$ then $G$ has a normal Sylow $p$-subgroup) since any character whose degree is coprime to $p$ cannot vanish on any $p$-element (see \cite[Remark~4.1]{DPSS}).

In this paper, we study certain properties of vanishing conjugacy classes of groups. We first consider the problem of finding the least number of conjugacy classes of a group such that every non-linear irreducible character of the group vanishes at one of them. More precisely, given a group $G$ and a positive integer $k$, we say that $G$ satisfies property $\HH_k$ if there exists a set  $\mathcal{C}$ of  conjugacy classes of $G$ such that $|\mathcal{C}|\leq k$ and every non-linear irreducible character of $G$ vanishes on at least one class in $\mathcal{C}$. Clearly, if $G$ satisfies $\HH_k$, then it also satisfies $\HH_{m}$ for any integer $m \geq k$; we also note that the class of groups satisfying property $\HH_k$ is closed under taking factor groups and direct products (see Remark~\ref{Directproduct}). Given that, we propose the following conjecture.

\begin{conjecture}\label{conj:H3} Let $G$ be a group. Then the following conclusions hold. 
\begin{enumerate}
\item[(a)] $G$ satisfies $\HH_3$.
\item[(b)] If $G$ is solvable, then $G$ satisfies $\HH_2$.
\end{enumerate}
\end{conjecture}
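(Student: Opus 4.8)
The plan is to treat the two statements separately. Throughout, the available tools are Burnside's theorem that every nonlinear irreducible character has a zero, the theorem of Malle--Navarro--Olsson (\cite{MNO}) that such a zero can be taken of prime-power order, the It\^o--Michler-type theorem of \cite{DPSS}, and the two structural facts recalled in the introduction, namely that each class $\HH_k$ is closed under forming quotients and direct products. A delicate point to bear in mind from the start is that $\HH_k$ is \emph{not} obviously closed under passage to subgroups or to subdirect products, so the classical reduction to groups with a unique minimal normal subgroup is not automatic and must be justified by hand wherever it is used. It is also worth noting that the bound $3$ in part (a) is best possible: for $G=\mathrm{A}_5$ the two nonlinear characters of degree $3$ vanish only on the class of $3$-cycles, the one of degree $4$ only on the class of double transpositions, and the one of degree $5$ only on the two classes of $5$-cycles, so three classes are genuinely needed.

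\emph{Part (b).} I would establish this first, by induction on $|G|$, taking $G$ solvable with every proper quotient satisfying $\HH_2$. The informative characters are the nonlinear $\chi\in\irr{G}$ not containing any minimal normal subgroup in their kernel. Fixing such a minimal normal subgroup $M$, which by solvability is an elementary abelian $p$-group, Clifford theory describes the $\chi$ with $M\not\le\ker\chi$ as lying over the nontrivial $G$-orbits of $\widehat{M}$, and \cite{DPSS} forces $p$ to divide $\chi(1)$ for all of these, so that each such $\chi$ vanishes on some $p$-element. The crux is to produce a single conjugacy class $x^G$ of $p$-elements of $G$, with $x\notin\Center(G)$, on which all of these characters vanish simultaneously, and to pair it with one further class pulled back from $G/M$ that handles the characters with $M\le\ker\chi$. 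I expect the genuinely hard step to be the uniform control of the ``$M$-faithful'' characters, especially when $G/M$ acts on $\widehat{M}$ with several orbits of widely different sizes; a natural entry point is a detailed analysis of characters of a $p$-group acted on coprimely, where zeros are forced by the fixed-point structure of the action, together with the results on vanishing elements of solvable groups already in the literature.

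\emph{Part (a).} For arbitrary $G$ I would again argue by minimal counterexample and attempt the reduction to a unique minimal normal subgroup $N$; when $N$ is abelian the analysis should parallel part (b), the extra third class giving more room. When $N=S_1\times\cdots\times S_t$ is a direct product of isomorphic non-abelian simple groups permuted transitively by $G$, the decisive input is a statement about almost simple groups: for every finite non-abelian simple group $S$ there should be a set of at most three conjugacy classes of $S$ (of prime-power order, by \cite{MNO}) witnessing $\HH_3$ for $S$, together with a refinement strong enough to survive passage to wreath products and to extensions by automorphisms --- for instance that the classes may be chosen $\Aut(S)$-stable, and that two of them suffice apart from a short explicit list of exceptions such as $\mathrm{A}_5$. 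Granting this, the characters of $G$ that are nonlinear on $N$ are controlled by restriction to $N$ and Clifford theory, while the characters with $N\le\ker\chi$ are caught by the $\le 3$ pulled-back classes from $G/N$, and the remaining combinatorial task is to arrange the overlap so that the total stays $\le 3$. The principal obstacle is the verification of the simple-group input: it requires inspecting the character tables of the groups of Lie type (where the Steinberg character, the unipotent characters, and semisimple classes of small order give natural candidate zeros) and a finite check for the sporadic and alternating groups, followed by a careful study of how the candidate classes are permuted by diagonal and outer automorphisms --- which is, in my experience, exactly where arguments of this kind tend to break down.
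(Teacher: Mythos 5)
The statement you were asked to prove is labelled a \emph{Conjecture} in the paper, and the paper does not prove it: the authors only supply partial evidence (Theorem~\ref{th:simple} for non-abelian simple groups via strongly orthogonal classes, Theorem~\ref{th:almostsemisimple} for almost quasi-simple groups with alternating or sporadic socle, and, on the solvable side, the weaker bounds $\HH_{{\rm r}(G)}$ and the Fitting-height bound of Theorems~\ref{th:hn_pgroup} and~\ref{th:Fittingheight}). Your proposal likewise is not a proof but a strategy outline, and every decisive step is left open --- indeed you flag each one yourself. Concretely: in part (b) the entire argument hinges on producing a \emph{single} non-central class of $p$-elements on which every character in $\Irr(G\mid M)$ vanishes simultaneously; this is far stronger than anything the paper achieves (its own machinery, Lemma~\ref{alex1}, only yields one class \emph{per generator} of the nilpotent quotient, whence the bound ${\rm r}(G)$ rather than $2$), and no mechanism is offered for why the vanishing sets of the $M$-faithful characters should have a common class when $G/M$ acts on $\widehat{M}$ with many orbits. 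In part (a) the reduction to a unique minimal normal subgroup is unjustified --- as you note, $\HH_k$ is not known to be closed under subgroups or subdirect products, so a minimal counterexample need not have this form --- and the simple-group input you require (three classes witnessing $\HH_3$ that are $\Aut(S)$-stable and survive passage to wreath products, with an explicit list of exceptions admitting two classes) is strictly stronger than what Theorem~\ref{th:simple} provides and is nowhere verified.

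What you do get right is worth noting: the tightness example ${\rm A}_5$ is correct (the degree-$3$ characters vanish only on $3$-cycles, the degree-$4$ character only on the $(2,2)$-class, the degree-$5$ character only on $5$-cycles, so three classes are genuinely needed), and your instinct that the subdirect-product issue is the structural obstruction to a standard minimal-counterexample argument matches why the authors state this as a conjecture rather than a theorem. But as it stands the proposal identifies the hard points without resolving any of them, so it cannot be accepted as a proof of either part.
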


It is proven in \cite{Ladisch} that property $\HH_1$ implies solvability; the first author provides, in \cite{Lewis}, an overview of the research on non-abelian groups which satisfy $\HH_1$.  An element $g\in G$ on which all non-linear irreducible characters of $G$ vanish is been called an \emph{anticentral} or \emph{Camina} element in these two papers (respectively). On the other hand, a group satisfying $\HH_2$ is not necessarily solvable, as shown for instance by the symmetric group ${\rm{S}}_5$.

To give evidence in support of the above conjectures, we are able to verify Conjecture~\ref{conj:H3}(a) for all non-abelian simple groups as well as some groups closely related to the alternating groups and sporadic simple groups. The following result was originally formulated as a conjecture by Z. Arad, J. Stavi and M. Herzog in \cite{ASH}.
 
\begin{theorem}\label{th:simple}
Every non-abelian simple group satisfies $\HH_3.$
\end{theorem}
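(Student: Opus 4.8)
The plan is to prove Theorem~\ref{th:simple} through the classification of finite simple groups, treating the alternating groups, the sporadic groups together with the Tits group, and the simple groups of Lie type separately. Since a non-abelian simple group $G$ has only the trivial linear character, the property $\HH_3$ here amounts to the assertion that there exist at most three conjugacy classes of $G$ such that no non-trivial $\chi\in\irr G$ is simultaneously non-zero on all three of them.

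For $\mathrm{A}_n$ I would argue by the Murnaghan--Nakayama rule, using conjugacy classes of near-maximal cycle type (with small $n$ checked directly). In $\mathrm{S}_n$ the two classes of cycle types $(n)$ and $(n-1,1)$ already suffice: $\chi^\lambda$ vanishes on an $n$-cycle unless $\lambda$ is a hook, and a short border-strip analysis shows that $\chi^\lambda$ vanishes on an $(n-1,1)$-element whenever $\lambda$ is a non-linear hook. In passing to $\mathrm{A}_n$ one must restrict attention to classes of even permutations (exactly one of the two classes above is even, and it may split in $\mathrm{A}_n$) and control the two constituents of a self-conjugate $\chi^\lambda$ on restriction; this forces a parity-dependent choice of the classes and occasionally a third one, but the combinatorics stays elementary. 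The sporadic groups and the Tits group are dispatched by inspecting their character tables.

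The groups of Lie type form the heart of the proof. For $G=\mathbf{G}^F$ I would take the class of a regular unipotent element $u$ together with the classes of two regular semisimple elements $s_1,s_2$, lying respectively in a maximally split maximal torus and in a minisotropic (Coxeter-type) maximal torus. Every non-trivial unipotent character vanishes at $u$: the trivial character is the unique unipotent character whose unipotent support is the regular unipotent class, and any irreducible character vanishes on every unipotent class whose dimension exceeds that of its unipotent support. For a regular semisimple element $g$ lying in a torus $T$, the only Deligne--Lusztig characters not vanishing at $g$ are those attached to $T$, and $\chi$ agrees with its uniform projection at such a $g$; hence $\chi(g)\neq0$ forces $\langle\chi,R_T^\theta\rangle\neq0$ for some $\theta$. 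Applying this at $s_1$ and $s_2$ and invoking the Jordan-decomposition description of these multiplicities, a character $\chi\in\cE(\mathbf{G}^F,(s))$ that is non-zero at both $s_1$ and $s_2$ must have $s$ conjugate into the duals of both tori (so the order of $s$ is bounded independently of $q$) and must correspond to a unipotent character of $C_{\mathbf{G}^{*F}}(s)$ lying in the principal series relative to two ``opposite'' tori --- which pins it down to a very short, $q$-independent list. That residual list is then handled by showing that each such character also vanishes at $u$, at $s_1$ or at $s_2$, using explicit character values or Gelfand--Graev induction. The main work, and the main obstacle, lies precisely in this residual list together with the small groups --- where a split or a minisotropic torus may be too small to contain a regular element --- namely $\tw2\rB_2(q)$, $\tw2\rG_2(q)$, $\tw2\rF_4(q)$, $\tw3\rD_4(q)$, $\mathrm{PSL}_2(q)$, and the classical groups over small fields, which I would treat individually from their known character tables.

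Finally, the value $3$ is optimal: for $q$ odd the group $\mathrm{PSL}_2(q)$ --- in particular $\mathrm{A}_5$ --- requires one class of unipotent elements and one class of regular semisimple elements in each of its two maximal tori, so no two classes suffice.
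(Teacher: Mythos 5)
Your overall architecture (CFSG, Murnaghan--Nakayama for the alternating/symmetric case, character tables for the sporadic groups, Deligne--Lusztig theory for Lie type) matches the paper's, and your observation that $(n)$- and $(n-1)$-cycles give $\HH_2$ for $\Sym_n$ is exactly the paper's argument. For Lie type, however, the paper does not redo the Deligne--Lusztig analysis: it quotes the existence of a \emph{strongly orthogonal} pair of classes $(C_1,C_2)$ (only two irreducible characters nonvanishing on both) from Malle--Saxl--Weigel and L\"ubeck--Malle, and then only needs to observe that the unique surviving non-linear character has some zero, which supplies $C_3$. Your sketch essentially re-derives what those references prove, and in doing so it acquires genuine gaps.

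The concrete problems are these. First, your choice of a \emph{maximally split} torus for $s_1$ fails for infinitely many groups: for $q=2$ or $3$ (and more generally whenever $q$ is small relative to the rank) a split maximal torus contains no regular semisimple element at all, so $s_1$ does not exist for $\SL_n(2)$, $\Sp_{2n}(2)$, $\Omega_{2n}^{\pm}(2)$, $\SU_n(2)$, etc.; ``treating the classical groups over small fields individually from their character tables'' is not a finite task, and this is precisely why \cite{MSW} takes both classes to be regular semisimple elements of large (Zsigmondy-prime) order in two nearly anisotropic tori. Second, you flag the wrong exceptional cases: the one family where no strongly orthogonal pair is available and real extra work is required is $\mathrm{O}_{2n}^+(q)$ with $n\geq 4$ even, where two extra unipotent characters $\psi_1,\psi_2$ survive alongside the Steinberg character; the paper kills all three on a single class of elements of order divisible by $p\,\ell(2n-4)$ using $p$- and $\ell(2n-4)$-defect zero, and your proposal never identifies this case. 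Third, your reduction of the residual list to the regular unipotent class rests on the assertion that every non-trivial character in it vanishes at a regular unipotent element; for the non-Steinberg unipotent characters this needs the full unipotent-support machinery (Lusztig, completed by Geck--Malle and Taylor in bad characteristic), which is far heavier than anything the paper uses, and the extension from unipotent characters to the whole residual list (Lusztig series attached to small-order semisimple elements of the dual group) is asserted rather than argued. Your closing remark that $3$ is optimal, witnessed by $\Alt_5\cong\PSL_2(5)$, is correct.
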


Recall that for a non-abelian simple group $S$, the group $G$ is {\it almost simple} with socle $S$ if it has a unique minimal normal subgroup and this subgroup is isomorphic to $S$.  We define $G$ to be {\it quasi-simple} if $G/\Center (G)$ is a nonabelian simple group $S$.  We say that the group $G$ is an \emph{almost quasi-simple group} (related to $S$) if $G/\Center(G)$ is almost simple with socle $S$.

\begin{theorem}\label{th:almostsemisimple}
Let $G$ be an almost quasi-simple group whose socle is an alternating or a sporadic simple group. Then $G$ satisfies $\HH_3$. Furthermore, if $G$ is a symmetric group, then in fact $G$ satisfies $\HH_2$.
\end{theorem}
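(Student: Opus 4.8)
The plan is to treat the two families---alternating socles and sporadic socles---separately, and within each to reduce to a manageable list of cases by exploiting the known classification of $\Out(S)$ and $\Center$ of the relevant covering groups, together with the direct-product/quotient stability of property $\HH_3$ recorded in Remark~\ref{Directproduct}. First I would handle the sporadic case: here $S$ runs over a finite list, $\Out(S)$ is trivial or $C_2$, and the Schur multiplier of $S$ is small, so an almost quasi-simple group $G$ related to $S$ is a central extension (by a subgroup of the Schur multiplier) of either $S$ or $S.2$. Thus there are only finitely many groups $G$ to consider up to the center, and for each the relevant character tables are available (in the Atlas and in GAP's character table library, including tables of central extensions). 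For each such $G$ I would exhibit, by a direct search through the character table, three conjugacy classes on which every non-linear irreducible character vanishes; since a non-linear $\chi\in\irr G$ with $\Center(G)\le\Ker\chi$ descends to $G/\Center(G)$, one can largely bootstrap from the $S$ and $S.2$ data and then separately account for the faithful characters of the proper central extensions.

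For the alternating socles, the structure is: $S=\Alt_n$ with $n\ge 5$, $\Out(S)=C_2$ for $n\ne 6$ (so $S.2=\Sym_n$) and $\Out(\Alt_6)$ is larger but still a $2$-group of order $4$; the Schur multiplier is $C_2$ for $n\ge 8$ (and $C_6$ for $n=6,7$). So again $G$ is, modulo its center, one of $\Alt_n$, $\Sym_n$, or (for $n=6$) one of the two other index-two overgroups $\PGL_2(9)$ and $M_{10}$, with $G$ itself a central extension by a subgroup of the Schur multiplier---either a split-off direct factor (handled by Remark~\ref{Directproduct}) or the relevant double/triple/sextuple cover. The main work is then: (i) for $\Sym_n$, prove $\HH_2$ directly; (ii) deduce $\HH_3$ for $\Alt_n$ and for the covers $2.\Alt_n$, $2.\Sym_n^{\pm}$, and the exceptional covers of $\Alt_6,\Alt_7$; (iii) dispose of the small exceptional overgroups of $\Alt_6$ by hand. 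For (i) I expect to use the combinatorics of character zeros in $\Sym_n$: by a theorem in the spirit of work on the "vanishing class" structure of symmetric groups, one can pick two classes---natural candidates being the class of an $n$-cycle (or $(n-1)$-cycle) and the class of a transposition, or more robustly a class of $2$-power-order elements and a class of elements whose cycle type forces a zero via the Murnaghan--Nakayama rule---such that every non-linear $\chi_\lambda$ vanishes on at least one of them; the key input is that a self-conjugate partition $\lambda$ (whose character is the only one not vanishing at certain "split" classes) still vanishes at a well-chosen second class, and non-self-conjugate $\lambda$ already vanish at the first.

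For the faithful characters of the covering groups $2.\Alt_n$ and $2.\Sym_n$, I would invoke the Murnaghan--Nakayama-type rule for spin characters (Morris's recursion) and the known description of which spin characters vanish on which classes; the natural vanishing classes are those labelled by partitions into distinct parts that are not "bar-self-conjugate," plus one extra class to catch the associate pairs. Combining the linear-lift characters (pulled back from $\Sym_n$, handled by (i) plus one more class for parity/center) with the faithful ones should give $\HH_3$ overall for $2.\Sym_n$, hence for all quotients, i.e. for all $G$ in this family. The hard part, I expect, is precisely the uniform statement (i) for $\Sym_n$ together with its spin-character analogue: choosing a fixed pair (resp. triple) of cycle-type classes that simultaneously "kills" all ordinary and all spin non-linear characters for every $n$ requires a careful case analysis separating self-conjugate/bar-self-conjugate partitions from the rest and checking the boundary values of $n$ (roughly $5\le n\le 9$) by explicit computation, since the asymptotic argument via Murnaghan--Nakayama only kicks in once $n$ is large enough; the sporadic-socle case, by contrast, is essentially a finite verification.
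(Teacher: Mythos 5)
Your high-level strategy coincides with the paper's: a finite GAP/Atlas verification for the sporadic socles and for $\Alt_n$ with $n$ small, a reduction to the double covers $2.\Sym_n$ and $2.\Alt_n$ for $n\geq 8$, Murnaghan--Nakayama for the ordinary characters, Stembridge's spin-character theory for the faithful ones, and a hook argument for the $\HH_2$ claim about $\Sym_n$. The problem is that the entire content of the theorem lies in exhibiting, uniformly in $n$, explicit conjugacy classes that do the job and verifying that they work, and you explicitly defer this (``the hard part, I expect, is precisely the uniform statement''). What you have written is a roadmap, not a proof. The paper's verification is in fact short once the right classes are named: for even $n\geq 8$ one takes the classes of cycle type $(n-1,1)$, $(n-3,3)$ and $(n-4,2,1^2)$; for odd $n\geq 9$ the types $(n)$, $(n-4,2^2)$ and $(n-5,4,1)$; a non-linear $\chi^\la$ surviving the first two classes must have both of the corresponding long hooks, which pins $\la$ down to two partitions, and those vanish on the third class.

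Two of the concrete suggestions you do make would fail. First, the pair ($n$-cycle, transposition) does not give $\HH_2$ for $\Sym_n$: the character $\chi^{(n-1,1)}$ takes the value $-1$ on an $n$-cycle and $n-3$ on a transposition, so it vanishes on neither. The correct pair is ($n$-cycle, $(n-1)$-cycle), and the mechanism is that $(n)$ and $(1^n)$ are the only partitions admitting both an $n$-hook and an $(n-1)$-hook; self-conjugacy of $\la$, which you emphasize, is irrelevant to this step (it matters only for the descent to $\Alt_n$, which you do not address -- the paper handles the split characters by citing Morotti--Tong-Viet). Second, for the spin characters your proposed vanishing classes ``labelled by partitions into distinct parts'' point in the wrong direction: the efficient move is to choose a single class $g_\mu$ where $\mu$ has \emph{repeated} parts (so $\mu\neq\la$ automatically for every spin label $\la$, since those have distinct parts) together with the right parity of parts, whereupon Stembridge's theorem forces \emph{every} spin character to vanish at $g_\mu$; this is exactly what $(n-4,2,1^2)$ (resp.\ $(n-4,2^2)$) achieves, and it is why three classes suffice in total rather than two plus a separate family for the spin characters.
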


Regarding the ``solvable part" of Conjecture~\ref{conj:H3}, we prove a partial result. Recall that, if $G$ is a solvable group, then the set of subgroups of $G$ defined recursively by 
$${\rm{\bf F}}_1(G)={\rm{\bf F}}(G) {\text{ (the Fitting subgroup of }}G), \quad{\rm{\bf F}}_{i}(G)/{\rm{\bf F}}_{i-1}(G)={\rm{\bf F}}(G/{\rm{\bf F}}_{i-1}(G)) {\text{ for }}i\geq 2,$$ 
is called the \emph{Fitting series} of $G$.  It is an ascending chain of characteristic subgroups that reaches $G$ in a finite number of steps: this number is called the \emph{Fitting height} ${\rm h}(G)$ of $G$. Now, setting conventionally ${\rm{\bf F}}_0(G)=1$, define ${\rm r}_i(G)$ as the minimum size of a generating set of ${\rm{\bf F}}_{i}(G)/{\rm{\bf F}}_{i-1}(G)$ and set ${\rm r}(G)$ to be the sum of the ${\rm r}_i(G)$ for all $i\in\{1,\ldots,{\rm h}(G)\}$.

\begin{theorem}\label{th:hn_pgroup}
Let $G \neq 1$ be a solvable group. Then $G$ satisfies $\HH_{{\rm r}(G)}$.
\end{theorem}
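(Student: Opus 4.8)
The plan is to induct on $|G|$, organized around the bottom of the Fitting series, using two elementary facts throughout. First, $\HH_k$ passes to quotients, and if $\psi\in\irr{G/M}$ vanishes on a conjugacy class $\bar C$ of $G/M$ then the inflation of $\psi$ to $G$ vanishes on every conjugacy class of $G$ lying in the preimage of $\bar C$. Second, a character $\chi\in\irr{G}$ and any linear twist $\lambda\chi$ (with $\lambda\in\irr{G}$ linear) have the same set of zeros, so a family of classes witnessing $\HH_k$ for the inflations of characters of $G/M$ automatically works for all linear twists of those inflations as well.

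\emph{Base case: $G$ nilpotent.} Then $G={\rm{\bf F}}(G)$ and ${\rm r}(G)=d(G)$, the minimal number of generators of $G$. Since $\HH_k$ is closed under direct products (Remark~\ref{Directproduct}) and $d(G)$ is the maximum of $d(P)$ over the Sylow subgroups $P$ of $G$, a routine product-over-primes bookkeeping reduces us to $G=P$ a $p$-group and $k=d=d(P)=\dim_{\mathbb{F}_p}(P/\Phi(P))$. The key classical input is that \emph{every non-linear $\chi\in\irr{P}$ is induced from a maximal subgroup of $P$}: passing to $P/\kernel\chi$ and taking a (self-centralizing) maximal abelian normal subgroup $A$, a linear constituent of $\chi|_A$ cannot be $P$-invariant, for otherwise $A\le\Center(\chi)$ and $P/\kernel\chi$ would be abelian. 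Now pick maximal subgroups $M_1,\dots,M_d$ whose images in $P/\Phi(P)$ are the $d$ coordinate hyperplanes, and choose $x_j\in\bigl(\bigcap_{i\neq j}M_i\bigr)\setminus M_j$; the images of $x_1,\dots,x_d$ span $P/\Phi(P)$, so no maximal subgroup of $P$ contains all of them. Hence if $\chi=\eta^P$ with $\eta\in\irr{M}$ for a maximal $M$, then $\chi$ vanishes on $x_j^P$ for any $x_j\notin M$, and $\{x_1^P,\dots,x_d^P\}$ witnesses $\HH_d$.

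\emph{Inductive step.} Put $N={\rm{\bf F}}(G)$, so that ${\rm h}(G/N)={\rm h}(G)-1$, ${\rm r}(G/N)={\rm r}(G)-{\rm r}_1(G)$ and ${\rm r}_1(G)=d(N)$; also $\Centralizer_G(N)=\Center(N)$ since $G$ is solvable. By induction $G/N$ satisfies $\HH_{{\rm r}(G)-d(N)}$, and inflating a witnessing family and taking preimage classes gives ${\rm r}(G)-d(N)$ classes of $G$. By Gallagher's theorem together with the second fact above, these classes take care of every non-linear $\chi\in\irr{G}$ lying over a $G$-invariant linear character of $N$ that extends to $G$ — equivalently, every $\chi$ of the form $\lambda\cdot\mathrm{Infl}(\beta)$ with $\lambda\in\irr{G}$ linear and $\beta\in\irr{G/N}$. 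It remains to produce $d(N)$ further classes of $G$ catching every non-linear $\chi\in\irr{G}$ \emph{not} of this form. Such a $\chi$ lies over the $G$-orbit of some $\theta\in\irr{N}$ with $\theta\ne 1_N$, and three cases arise. (A) $\theta$ is non-linear: here the identity $\chi(x)=e\,|\Centralizer_G(x)|\cdot|I_G(\theta)|^{-1}\sum_{y\in x^G}\theta(y)$ for $x\in N$ (with $e$ the ramification) reduces the problem to exhibiting at most $d(N)$ elements of $N$ whose $G$-classes lie inside the zero set of $\theta$, which one does using that each non-linear constituent of $\theta$ is induced from a maximal subgroup of a Sylow subgroup of $N$ (so $\theta$ vanishes on a large, conjugation-stable part of $N$) and the product-over-primes bookkeeping again. (B) $\theta$ is linear but not $G$-invariant: then $\chi$ is induced from the proper subgroup $I_G(\theta)\ge N$ and hence vanishes off it, and one controls the finitely many inertia subgroups that occur — they are pulled back from the action of $G/N$ on $\irr{N/N'}$ — so that a bounded number of classes with representatives outside them suffices. (C) $\theta$ is linear and $G$-invariant but does not extend to $G$: quotienting by $\kernel\theta$ one reaches a group in which the image of $N$ is a central cyclic subgroup not killed by $\chi$, a genuinely projective situation that is disposed of by a further reduction of the same kind.

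The step I expect to be the main obstacle is exactly this assembly: packaging cases (A), (B), (C) into a single family of at most $d(N)={\rm r}_1(G)$ conjugacy classes — simultaneously controlling the zero sets of non-linear characters of $N$ on whole $G$-classes, the inertia subgroups arising from non-invariant linear characters of $N$, and the non-extendible $G$-invariant linear characters of $N$ — while checking that summing this allowance along the Fitting series yields precisely ${\rm r}(G)=\sum_i{\rm r}_i(G)$ rather than a larger bound.
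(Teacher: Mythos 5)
There is a genuine gap, and it sits exactly where you suspected. Your base case (nilpotent $G$) is fine: it amounts to Burnside's Basis Theorem, the fact that a non-linear irreducible character of a $p$-group vanishes off a maximal (hence normal) subgroup, and Remark~\ref{Directproduct}. But the inductive step peels off the \emph{bottom} layer $N={\rm{\bf F}}(G)$, whose quotient $G/N$ is not nilpotent, and this choice is what makes cases (A), (B), (C) intractable as stated. In (A), a zero of $\theta$ is not a zero of $\chi$, since $\chi_N=e\sum_i\theta_i$ is a sum over $G$-conjugates; you therefore need $\theta$ to vanish on an entire $G$-class of $N$, which is a much stronger demand than $\HH_{d(N)}$ for $N$ and is not established (the zero set $P\setminus M$ of a character induced from a maximal subgroup $M$ of a Sylow subgroup is not stable under $G$-conjugation). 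In (B), since $G/N$ is not nilpotent the inertia subgroups $I_G(\theta)$ need not be normal, $\chi$ vanishes only off $\bigcup_g I_G(\theta)^g$, and your representatives cannot lie in $N$ (as $N\leq I_G(\theta)$); no argument is given that $d(N)$ elements can simultaneously escape the conjugates of all inertia subgroups that occur. Case (C) is the fully ramified situation, which is genuinely projective and is simply deferred; and even granting each case separately, merging (A), (B), (C) into a \emph{single} family of $d(N)$ classes is asserted, not proved.

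The paper avoids every one of these obstacles by inducting from the \emph{top} of the Fitting series: with $K={\rm{\bf F}}_{h-1}(G)$ the quotient $G/K$ \emph{is} nilpotent, so Lemma~\ref{alex1} produces, for any $\chi$ with $\chi_K$ reducible, a proper \emph{normal} subgroup $L\supseteq K$ off which $\chi$ vanishes --- in the induced case because maximal subgroups of a nilpotent quotient are normal, and in the fully ramified case because the Schur cover of a nilpotent group is nilpotent, hence its non-linear characters are induced from proper normal subgroups. Then ${\rm r}_h(G)$ generators of $G/K$ catch all such $\chi$, while any $\chi$ with $\chi_K$ irreducible is handled by the inductive hypothesis applied to $K$, since a zero of $\chi_K$ is literally a zero of $\chi$ --- no Clifford sums, no whole-$G$-class condition. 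If you want to salvage your bottom-up strategy you would essentially have to reprove Lemma~\ref{alex1} with a non-nilpotent quotient, which is false in the generality you need; the workable fix is to flip the induction so that the nilpotent piece is the quotient, not the subgroup.
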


Note that if $p$ is a prime, $G$ is a $p$-group, and ${\bf{\Phi}}(G)$ denotes the Frattini subgroup of $G$, then by Burnside's Basis Theorem (\cite[III, 3.15]{Hu}) the number ${\rm r}(G)$ is such that $p^{{\rm r}(G)}=|G/{\bf{\Phi}}(G)|$. Thus, Conjecture~\ref{conj:H3}(b) holds for every $p$-group $G$ such that $|G/{\bf{\Phi}}(G)|=p^2$.

\medskip
Many properties of zeros of characters can be stated easier using graph theoretic language, and, in a recent paper by N. N. Hung, A. Moret\'o and the second author (\cite{HMM}), two simple undirected graphs related to zeros of characters of a group $G$ have been introduced. The \emph{common-zero graph} $\Gamma_v(G)$ has the set of non-linear irreducible characters of $G$ as its vertex set, and two vertices $\chi,\psi$ are joined by an edge if and only if there exists $g\in G$ such that $\chi(g)=0=\psi(g)$; dually, the set of vanishing conjugacy classes of $G$ is taken as the set of vertices of  the other graph, denoted by $\Delta_v(G)$, where two vertices $g^G,h^G$ are adjacent if and only if there exists $\chi\in\irr G$ such that $\chi(g)=0=\chi(h)$. 

It is shown in Lemma~2.1 of \cite{HMM} that $\Gamma_v(G)$ and $\Delta_v(G)$ always have the same number of connected components, and this number is at most $2$ whenever $G$ is a solvable group; the authors of that paper conjecture that, for every group $G$, these two graphs have at most three connected components and they prove this in the case when $G$ is a non-abelian simple group (\cite[Theorem~D]{HMM}). 

Now, the conclusions of Conjecture~\ref{conj:H3} (if confirmed) easily imply that the \emph{independence number} of $\Gamma_v(G)$ (i.e., the maximum size of a set of pairwise non-adjacent vertices in $\Gamma_v(G)$) is at most $3$ for any group $G$, and it is $2$ if $G$ is solvable. We state next this weaker conjecture.

\begin{conjecture}\label{conj:3characters}
Let $G$ be a group. Then the following conclusions hold.
\begin{enumerate}
\item[(a)] For any four non-linear irreducible characters of $G$, two of them have a common zero.
\item[(b)] If $G$ is solvable then, for any three non-linear irreducible characters of $G$, two of them have a common zero.
\end{enumerate}
\end{conjecture}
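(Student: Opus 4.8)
The plan is to derive Conjecture~\ref{conj:3characters} from Conjecture~\ref{conj:H3} by a pigeonhole argument, and to record which instances this already settles unconditionally. Suppose $G$ satisfies $\HH_k$, witnessed by conjugacy classes $C_1,\dots,C_k$ on which every non-linear irreducible character of $G$ vanishes. Given $k+1$ non-linear characters $\chi_1,\dots,\chi_{k+1}\in\irr G$, each $\chi_j$ vanishes on some $C_{i(j)}$; as there are only $k$ available classes, two of the characters, say $\chi_a$ and $\chi_b$, vanish on the same $C_i$, and any $g\in C_i$ is a common zero. Hence part~(a) follows from $\HH_3$ and part~(b) from $\HH_2$. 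Combining this with the results above, Conjecture~\ref{conj:3characters} holds unconditionally for every non-abelian simple group (Theorem~\ref{th:simple}), for every almost quasi-simple group with alternating or sporadic socle (Theorem~\ref{th:almostsemisimple}), and --- via Theorem~\ref{th:hn_pgroup} --- part~(b) holds for every solvable $G$ with $\mathrm r(G)\le 2$, in particular for every $p$-group $G$ with $|G/{\bf\Phi}(G)|=p^2$.

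For the general statement I would aim directly at Conjecture~\ref{conj:H3}, which as above subsumes Conjecture~\ref{conj:3characters}, and argue by induction on $|G|$: one seeks three classes (two in the solvable case) covering all non-linear irreducible characters. Closure of the $\HH_k$-property under quotients and direct products (Remark~\ref{Directproduct}) reduces, roughly, to a group $G$ with a single minimal normal subgroup $N$. If $N$ is abelian, one tries to take two of the classes from fibres over $N$ to kill the characters non-trivial on $N$ (exploiting that such characters, having non-central constituents over $N$, vanish on suitable elements), keeping the third class to absorb $\irr{G/N}$ via the inductive hypothesis. If $N$ is a direct power of a non-abelian simple group $S$, one feeds in Theorem~\ref{th:simple} together with Clifford theory and extendibility arguments. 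Throughout, the tools for producing the needed zeros would be the prime-power-order zero of Malle--Navarro--Olsson \cite{MNO} and the Dolfi--Pacifici--Sanus--Spiga criterion \cite{DPSS} that a group with no vanishing $p$-element has a normal Sylow $p$-subgroup.

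The main obstacle, I expect, is exactly the gap exposed by Theorem~\ref{th:hn_pgroup}: a solvable group of large Fitting height is only shown to satisfy $\HH_{\mathrm r(G)}$, because an induction along the Fitting series costs one conjugacy class per chief factor. Pushing this down to $\HH_2$ seems to require a genuinely global choice --- for instance a single vanishing class tied to $\Fit(G)$ together with one further class detecting the (few) non-linear characters that remain non-vanishing on $\Fit(G)$ --- rather than layer-by-layer bookkeeping. On the non-solvable side the parallel difficulty is channeling the simple-group input of Theorem~\ref{th:simple} through the generalized Fitting subgroup while holding the class count at three: the direct-product closure handles many isomorphic components at once, but mixed composition layers, diagonal subgroups, and the interaction of the layer with the solvable radical are where the real work lies.
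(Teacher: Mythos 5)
The statement you are addressing is stated in the paper as a conjecture, and the paper does not prove it in general; so there is no ``paper's proof'' to match, and your proposal, as you acknowledge, does not close the problem either. Your opening pigeonhole reduction is exactly the observation the paper makes in the Introduction: if every non-linear irreducible character vanishes on one of $k$ fixed classes, then among any $k+1$ such characters two share a class, hence a common zero. That part is correct, as is your list of unconditional instances coming from Theorems~\ref{th:simple}, \ref{th:almostsemisimple} and~\ref{th:hn_pgroup}. The remainder of your proposal is a strategy sketch whose central difficulties (the cost of one class per Fitting layer, and the interaction with the generalized Fitting subgroup in the non-solvable case) you correctly identify as unresolved; it is not a proof.

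What you miss, and where the paper genuinely differs in method, is that Conjecture~\ref{conj:3characters} is strictly weaker than Conjecture~\ref{conj:H3} and admits direct attacks that do not route through any $\HH_k$ property. The paper proves part (b) for all abelian-by-metanilpotent groups (Theorem~\ref{th:abelian-by-metanilpotent}) and bounds the independence number of $\Gamma_v(G)$ by the Fitting height for all solvable $G$ (Theorem~\ref{th:Fittingheight}). The engine is Lemma~\ref{alex2}: if $K\trianglelefteq G$ with $G/K$ nilpotent and two irreducible characters both fail to restrict irreducibly to $K$, then each vanishes off a proper normal subgroup containing $K$, and a common zero is found in the complement of the union of the two. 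This produces a common zero \emph{for a given pair of characters}, chosen after the characters are fixed, rather than a universal covering set of classes valid for all characters at once. That is why the paper gets the independence bound ${\rm h}(G)$ for Theorem~\ref{th:Fittingheight} while only getting $\HH_{{\rm r}(G)}$ (a much larger index in general) for the covering problem. If your goal is Conjecture~\ref{conj:3characters}(b) rather than $\HH_2$, you should be working with this pairwise mechanism --- the bottleneck there is the characters that restrict irreducibly all the way down the Fitting series, not the layer-by-layer bookkeeping of classes.
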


Note that the conclusions of the statement above in turn imply that $\Gamma_v(G)$ has at most three connected components for any group $G$, and at most two connected components whenever $G$ is solvable. In particular, Theorem~\ref{th:simple} (which implies Conjecture~\ref{conj:3characters}(a) for non-abelian simple groups) is a generalization of \cite[Theorem~D]{HMM}.

As for Conjecture~\ref{conj:3characters}(b), we prove it in a special case. Recall that a group $G$ is abelian-by-metanilpotent if there exist normal subgroups $N\leq K$ of $G$ such that $N$ is abelian, and both factor groups $K/N$ and $G/K$ are nilpotent.

\begin{theorem}\label{th:abelian-by-metanilpotent} 
Let $G$ be an abelian-by-metanilpotent group. Then, for any three non-linear irreducible characters of $G$, two of them have a common zero.
\end{theorem}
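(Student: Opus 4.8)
The plan is to argue by induction on $|G|$. Since the class of abelian-by-metanilpotent groups is closed under quotients (if $N\leq K$ are normal subgroups witnessing the definition for $G$, then $NM/M\leq KM/M$ witness it for $G/M$), the inductive hypothesis is available for every proper quotient of $G$. So suppose $G$ is a counterexample of least order and pick non-linear $\chi_1,\chi_2,\chi_3\in\irr G$ no two of which have a common zero; write $V(\chi)=\{x\in G:\chi(x)=0\}$ for the vanishing set of a non-linear character, which is a non-empty union of non-central conjugacy classes by Burnside's theorem. If some $1\neq N\trianglelefteq G$ lay in $\ker\chi_1\cap\ker\chi_2\cap\ker\chi_3$, the $\chi_i$ would be non-linear characters of the smaller group $G/N$, against minimality; hence that intersection is trivial. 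One useful way to see the shape of the argument is that the theorem would follow at once from a partition of the non-linear irreducible characters of $G$ into two cliques of the common-zero graph $\Gamma_v(G)$; the examples $\Sym_4$ and $\SL(2,3)$, both abelian-by-metanilpotent, already admit such partitions.

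The core step is Clifford theory with respect to a minimal normal subgroup $N$ of $G$: as $G$ is solvable, $N$ is an elementary abelian $p$-group, $N\leq\Fit(G)$, and $\widehat N=\Hom(N,\bbC^{\times})$ is an irreducible $\bbF_pG$-module. The $\chi_i$ are not all trivial on $N$, for otherwise the inductive hypothesis applied to $G/N$ (again abelian-by-metanilpotent) produces a common zero; so we may assume $N\not\leq\ker\chi_3$, i.e.\ $\chi_3$ lies over a non-trivial $\lambda\in\irr N$. If $N\leq\Center(G)$, then $N\cong C_p$; a non-linear character of $G$ is either trivial on $N$ (a character of $G/N$, controlled by induction) or lies over a faithful character of $N$. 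For two characters $\chi,\psi$ of the latter kind the products $\chi\overline\psi$ and $\chi\overline\chi$ are genuine characters of $G/N$, and comparing the zeros of $\chi$ and $\psi$ through these products — together with Gallagher's theorem over suitable normal subgroups to locate extensions — recovers the two-clique partition, a contradiction.

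If instead $N\not\leq\Center(G)$, then $G$ acts non-trivially on the irreducible module $\widehat N$ and so fixes no non-trivial element of it; hence every non-linear $\chi\in\irr G$ that is not trivial on $N$ lies over a non-$G$-invariant linear character $\mu$ of $N$, whence $\chi=\Ind_{I_G(\mu)}^{G}\eta$ with $I_G(\mu)<G$, and $\chi$ vanishes on the whole of $G\setminus\bigcup_{g\in G}I_G(\mu)^{g}$. In particular $\chi_3$ has this form; were $V(\chi_1)$ or $V(\chi_2)$ to meet $G\setminus\bigcup_g I_G(\mu)^{g}$ we would get a common zero with $\chi_3$, so $V(\chi_1)\cup V(\chi_2)$ lies inside a union of conjugates of a proper subgroup. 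Applying the same dichotomy to $\chi_1$ and $\chi_2$, and using the inductive hypothesis on $G/N$ to describe where those $\chi_i$ that are trivial on $N$ vanish, one is reduced to a covering problem inside the metanilpotent quotient $G/N$: the three vanishing sets, each a union of non-central classes, are all trapped in unions of conjugates of proper subgroups. This is contradicted using the structure of $G/N$ — the self-centralising Fitting subgroup and its Carter subgroups — together with the auxiliary fact, which can be checked directly, that the common-zero graph of a nilpotent group is complete; the latter feeds in through the nilpotent top quotient $G/\Fit_2(G)$, recalling that an abelian-by-metanilpotent group has Fitting height at most $3$.

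The main obstacle is the non-central case, and precisely the sub-case in which two or all three of $\chi_1,\chi_2,\chi_3$ are non-trivial on $N$, so that each is induced from a proper — and, in general, non-normal — subgroup $T_i$: one must rule out that all three vanishing sets hide inside the sets $\bigcup_g T_i^{g}$. This is a sharp covering estimate for unions of conjugates of proper subgroups of a metanilpotent group, and it is exactly here that the full force of the abelian-by-metanilpotent hypothesis — rather than merely Fitting height $3$ — is used, so that the inductive bookkeeping closes with two cliques rather than three.
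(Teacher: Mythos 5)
There is a genuine gap here: your write-up is a plan rather than a proof, and you say so yourself in the last paragraph, where the decisive sub-case (two or three of the $\chi_i$ non-trivial on $N$, each induced from a proper, generally non-normal, inertia subgroup) is described as "the main obstacle" and is left open with only the assertion that the abelian-by-metanilpotent hypothesis "is used" there. The concrete reason this route stalls is that the information you extract from Clifford theory is too weak: knowing that $\chi_i$ vanishes on $G\setminus\bigcup_{g}T_i^{\,g}$ for a proper subgroup $T_i=I_G(\mu_i)$ does not let you find a common zero of two such characters, because the conjugates of two distinct proper subgroups can perfectly well cover a finite group (already in $\Sym_3$ the conjugates of a subgroup of order $2$ and the subgroup of order $3$ cover everything). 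So "all three vanishing sets are trapped in unions of conjugates of proper subgroups" is not a contradiction, and no covering estimate of the kind you invoke is available in general. The other soft spots (the central case "recovering the two-clique partition" via Gallagher, and the appeal to Carter subgroups of $G/N$) are likewise unexecuted.

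The paper closes exactly this gap by proving a stronger localization statement (its Lemma 3.1): if $K\trianglelefteq G$ with $G/K$ nilpotent and $\chi\in\irr G$ does \emph{not} restrict irreducibly to $K$, then $\chi$ vanishes off a proper \emph{normal} subgroup $L\supseteq K$. Normality is what saves the day, since a union of two proper subgroups is always proper, so any two such characters share a zero (Lemma 3.2). The non-homogeneous case of Lemma 3.1 is your induction-from-$I_G(\mu)$ argument pushed one step further using that maximal subgroups of the nilpotent group $G/K$ are normal; the homogeneous case $\chi_K=e\theta$ with $e>1$ — which your sketch does not isolate — is handled with projective representations and a Schur representation group of $G/K$, again exploiting that a nilpotent group's non-linear characters are induced from proper normal subgroups. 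Given that lemma, the theorem is a two-line pigeonhole against the series $1\le N\le K\le G$: all three characters fail to restrict irreducibly to the abelian $N$; if two of them already fail at $K$, apply the lemma to $G/K$ nilpotent; otherwise two of them restrict irreducibly to $K$ and you apply the lemma inside $K$ over $N$ using that $K/N$ is nilpotent. No induction on $|G|$, no minimal normal subgroup, and no covering of $G$ by conjugates is ever needed. If you want to salvage your approach, the missing ingredient is precisely a replacement for "union of conjugates of a proper subgroup" by "complement of a proper normal subgroup", and that is where the nilpotency of the relevant quotients must enter.
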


In general, for solvable groups, we provide the following bound for the independence number of $\Gamma_v(G)$ in terms of ${\rm h}(G)$. 

\begin{theorem}\label{th:Fittingheight}
Let $G$ be a solvable group, and let $h$ be the Fitting height of $G$. Then, for any $h+1$ non-linear irreducible characters of $G$, two of them have a common zero.
\end{theorem}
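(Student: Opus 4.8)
The plan is to induct on the Fitting height $h$ of $G$, using the Fitting series $1 = {\bf F}_0(G) < {\bf F}_1(G) < \cdots < {\bf F}_h(G) = G$. The base case $h = 1$ means $G$ is nilpotent, so every nonlinear irreducible character of $G$ vanishes on the entire set $G \setminus \Center(G)$ and in fact any two nonlinear characters have a common zero (indeed $G$ satisfies $\HH_1$ in that case, recovering the solvable part of the discussion after Conjecture~\ref{conj:H3}); so the conclusion holds with $h+1 = 2$. For the inductive step, suppose we are given $h+1$ nonlinear irreducible characters $\chi_1,\dots,\chi_{h+1}$ of $G$ and assume, for contradiction, that no two of them share a zero. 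Write $F = {\bf F}(G) = {\bf F}_1(G)$ and $N = {\bf F}_{h-1}(G)$, so that $G/N$ is nilpotent (it is the top of a Fitting series of length $h-1$ for $G/N$ — wait, more precisely $G/{\bf F}_{h-1}(G)$ is nilpotent by definition of the Fitting series).

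The key dichotomy is whether the characters restrict reducibly or irreducibly to the relevant normal subgroup. First I would look at the bottom: consider $Z = \Center(F)$, which is a nontrivial abelian normal subgroup of $G$, and use the standard fact (Gallagher/Clifford theory plus the observation that $F$ is nilpotent) that any $\chi \in \irr G$ with $\chi(1) > 1$ vanishing behavior is controlled by $F$. More useful, I expect, is the top of the series: any nonlinear character of $G/N$ vanishes on all of $(G/N) \setminus \Center(G/N)$, so if two of the $\chi_i$ have $N$ in their kernel we are immediately done by the nilpotent case. Hence at most one of the $\chi_i$ — say $\chi_{h+1}$ — can be inflated from $G/N$; the remaining $h$ characters $\chi_1,\dots,\chi_h$ are nontrivial on $N$. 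Now I would pick an irreducible constituent $\theta_i$ of $(\chi_i)_N$ for each $i \le h$ and let $T_i = I_G(\theta_i)$ be its inertia group; since $N = {\bf F}_{h-1}(G)$ and $N \neq 1$, each $T_i/N$ (or an appropriate Clifford correspondent) has Fitting height at most $h-1$. Applying the inductive hypothesis inside the Clifford correspondents should produce, for any $h$ of these characters, two with a common zero lying in an element outside $N$ (after using the fact that a zero of a Clifford correspondent $\psi_i \in \irr(T_i \mid \theta_i)$ produces a zero of $\chi_i = \psi_i^G$, via the vanishing-off-the-subgroup behavior of induced characters, i.e.\ $\chi_i$ vanishes on every $G$-class not meeting $T_i$ and, on classes meeting $T_i$, inherits zeros from $\psi_i$).

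The main obstacle — and the step that needs the most care — is matching up the inertia groups: the $h$ characters $\chi_1,\dots,\chi_h$ come with possibly distinct stabilizers $T_1,\dots,T_h$, so one cannot directly apply induction to all of them inside a single smaller group. The natural fix is a further case split: either some two $\theta_i,\theta_j$ are $G$-conjugate (then after conjugating we have two characters of the same $T := T_i$ lying over $\theta := \theta_i$, and $T$ has Fitting height $< h$ once we quotient appropriately — here one must check $\Center$ issues so that the induced characters really are nonlinear and the common zero of the Clifford correspondents, which lies in $T \setminus (\text{center-ish part})$, pushes to a genuine common zero of $\chi_i,\chi_j$ in $G$), or the $\theta_i$ lie in pairwise distinct $G$-orbits, in which case the supports of $\chi_1,\dots,\chi_h$ are spread across $N$ and I would instead argue directly that two of them vanish on a common element of $N$ — using that $\chi_i$ vanishes on any element of $N$ whose $\theta_i$-value is zero, or more robustly, a counting/pigeonhole argument on $\Irr(N)$-constituents combined with Thompson's theorem (every nonlinear $\chi$ vanishes on a large fraction of $G$). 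Getting the bookkeeping on centers and on the "nonlinear" hypothesis right through the Clifford reduction, so that zeros genuinely transfer both ways, is where the real work lies; once that is in place, the induction closes and $h+1$ characters cannot be pairwise zero-free.
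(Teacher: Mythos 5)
There is a genuine gap, and in fact two of your key assertions are false as stated. First, the base case: a non-linear irreducible character of a nilpotent group does \emph{not} in general vanish on all of $G\setminus\Center(G)$ (that is the ``central type'' condition, forcing $\chi(1)^2=|G:\Center(G)|$), and nilpotent groups do not in general satisfy $\HH_1$ (that would mean every nilpotent group has an anticentral element). The correct — and much weaker — fact that the paper uses is that every non-linear irreducible character of a nilpotent group is induced from a proper \emph{normal} subgroup and hence vanishes off that subgroup; two such characters then share a zero simply because a group is never the union of two proper subgroups. This is the content of the paper's Lemma~\ref{alex2} applied with $K=1$.

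Second, and more seriously, your inductive step splits on the wrong dichotomy. At $K={\bf F}_{h-1}(G)$ the relevant question is not whether $\chi_i$ contains $K$ in its kernel, but whether $\chi_i$ \emph{restricts irreducibly} to $K$. A character can be faithful on $K$ and still restrict reducibly; the paper's Lemma~\ref{alex1} (whose proof in the homogeneous case $\chi_K=e\theta$, $e>1$, requires projective representations and the nilpotency of $G/K$) shows that any such character vanishes off a proper normal subgroup of $G$ containing $K$, so if \emph{two} of the $h+1$ characters restrict reducibly to $K$ they already share a zero. Otherwise $h$ of them restrict irreducibly to $K$, their restrictions are non-linear irreducible characters of $K$, and since $K$ has Fitting height $h-1$ the inductive hypothesis applied to $K$ itself produces a common zero inside $K$ — no Clifford correspondents or inertia groups are needed. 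Your attempt to route the induction through the stabilizers $T_i=I_G(\theta_i)$ does not close: a zero of a Clifford correspondent $\psi_i$ at $t\in T_i$ does not yield $\chi_i(t)=0$, since $\chi_i(t)=\psi_i^G(t)$ is a sum of $\psi_i$-values over the $G$-conjugates of $t$ lying in $T_i$; likewise your fallback claim that $\chi_i$ vanishes at $n\in N$ whenever $\theta_i(n)=0$ is false, because $(\chi_i)_N=e\sum_j\theta_i^{g_j}$ is a sum over the whole orbit. The appeal to Thompson's theorem and pigeonhole in the ``distinct orbits'' case is not an argument. The missing idea is precisely Lemma~\ref{alex1}: reducible restriction to a normal subgroup with nilpotent quotient forces vanishing off a proper normal subgroup.
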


In other words, the independence number of $\Gamma_v(G)$ for a solvable group $G$ is at most ${\rm h}(G)$. It is worth noting that the graph $\Delta_v(G)$ has a very different behavior in this respect; in fact, denoting by ${\rm D}_{2^{n+1}}$ the dihedral group of order $2^{n+1}$ (whose Fitting height is $1$), for every $n\geq 2$ the independence number of $\Delta_v({\rm D}_{2^{n+1}})$ is $n-1$ (see Example~\ref{dihedral}).

\section{Non-solvable groups: a proof of Theorem~\ref{th:simple} and Theorem~\ref{th:almostsemisimple} }


In this section, we consider the results regarding nonsolvable groups.  In particular, we prove Theorem~\ref{th:simple} and Theorem~\ref{th:almostsemisimple}, in reverse order.

\begin{proof}[{Proof of Theorem $\ref{th:almostsemisimple}$}]
We start by proving the first claim: if $G$ is an almost quasi-simple group related to an alternating or sporadic simple group, then $G$ satisfies $\HH_3$. 

This can be checked using GAP \cite{GAP} if $G$ is an almost quasi-simple group related to a sporadic group or $\Alt_n$ with $n\leq 7$. As any irreducible character of a symmetric or alternating group is also an irreducible character of the corresponding double cover, we may from now on assume that $G=2.\Sym_n$ or $2.\Alt_n$ is a double cover of a symmetric or alternating group with $n\geq 8$. Note that, in view of \cite[p. 93]{stem}, when considering zeros of characters it does not matter which double cover of a symmetric group is being considered.
	
For any partition $\mu$ of $n$, let $g_\mu\in 2.\Sym_n$ be the lift of an element of $\Sym_n$ with cycle partition $\mu$. If $\mu$ has an even number of even parts then we also have that $g_\mu\in 2.\Alt_n$.
	
If $n\geq 8$ is even we will show that any non-linear irreducible character of $G$ vanishes on at least one of $g_{(n-1,1)}$, $g_{(n-3,3)}$ or $g_{(n-4,2,1^2)}$. Similarly for any odd $n\geq 9$, taking $g_{(n)}$, $g_{(n-4,2^2)}$ and $g_{(n-5,4,1)}$ instead. 
	
Assume first that $\chi$ is a spin character and let $\la$ be the labeling partition. Then the parts of $\la$ are distinct by \cite[Theorem 7.1 and Corollary 7.5]{stem}. Let $\mu=(n-4,2,1^2)$ or $(n-4,2^2)$ (depending on the parity of $n$). As $\mu$ has repeated parts, $\mu\not=\la$. Further $\mu$ has even parts. It follows by \cite[Theorem 7.1 and Corollary 7.5]{stem} that $\chi(g_\mu)=0$ (formulas for the appearing functions can be found in \cite[p. 118 and Theorem 7.4]{stem}).

So we may now assume that $\chi$ is an irreducible character of $\Alt_n$ or $\Sym_n$. By the second paragraph in the proof of \cite[Proof of Lemma 3.1]{MTV} it is enough to prove that any non-linear irreducible character of $S_n$ vanishes on at least one of $g_{(n-1,1)}$, $g_{(n-3,3)}$ or $g_{(n-4,2,1^2)}$ if $n$ is even, or one of $g_{(n)}$, $g_{(n-4,2^2)}$, $g_{(n-5,4,1)}$ if $n$ is odd.
	
So, consider an irreducible character of $\Sym_n$. Such a character is of the form $\chi^\la$ for a partition $\la$ of $n$ (see for example \cite[Theorem 2.3.15]{JK}). Note that $\chi^\la$ is linear if and only if $\la=(n)$ or $(1^n)$ (see for example \cite[Theorem 2.4.10]{JK}). In the following we will use the Murnaghan-Nakayama formula (see for example \cite[2.4.7]{JK}) without further reference.
	
If $\la\not\in\{(n),(1^n)\}$ and $\chi^\la$  vanishes on neither $g_{(n-1,1)}$ nor $g_{(n-3,3)}$, then $\la\in\{(n-3,2,1),(3,2,1^{n-5})\}$ (as $\la$ must have both an $(n-1)$- and an $(n-3)$-hook). In these cases, it can be checked that $\chi^\la$ vanishes on $g_{(n-4,2,1^2)}$. Similarly, if $\la\not\in\{(n),(1^n)\}$ and $\chi^\la$ vanishes on neither $g_{(n)}$ nor $g_{(n-4,2^2)}$, then
\[\la\in\{(n-1,1),(n-2,1^2),(n-3,1^3),(4,1^{n-4}),(3,1^{n-3}),(2,1^{n-2})\},\]
in which cases $\chi^\la$ always vanishes on $g_{(n-5,4,1)}$.
	
This concludes the proof of the first claim. Since (for any $n\geq 1$) the only partitions with both an $n$- and an $(n-1)$-hook are $(n)$ and $(1^n)$, the second claim that any symmetric group satisfies $\HH_2$ similarly follows (taking conjugacy classes corresponding to $n$- and $(n-1)$-cycles).
\end{proof}

Next, the proof of Theorem~\ref{th:simple}: every non-abelian simple group satisfies $\HH_3$.

\begin{proof}[{Proof of Theorem $\ref{th:simple}$}] 
Using the classification of finite nonabelian simple groups, we consider the following cases.
	
(1) $G$ is a sporadic simple group or $G=\Alt_n$ with $n\ge 5.$ In these cases the theorem holds by Theorem \ref{th:almostsemisimple}.
	
(2) $G$ is a simple group of Lie type. Recall that a pair of conjugacy classes $(C_1,C_2)$ of $G$ is called strongly orthogonal if there exist only two irreducible characters of $G$ that do not vanish on neither $C_1$ nor $C_2.$  Fix a prime power $q=p^f$, where $p$ is a prime and $f\ge 1$ is an integer. Let $n\ge 2$ be an integer. We say that $\ell$ is a primitive prime divisor (or Zsigmondy prime divisor) of $q^n-1$, denoted by $\ell=\ell(n)$, if $\ell$ divides $q^n-1$ but $\ell$ does not divide $q^k-1$ for all positive integers $k<n.$ By Zsigmondy's Theorem \cite{Zsig}, $\ell(n)$ always exists for any pair $(n,q)$ with $n\ge 2$, except for the cases $(n,q)=(6,2)$ or $n=2$ and $q+1$ a power of $2$. 
	
It follows from the proof of Theorems $2.1-2.6$ in \cite{MSW} for simple classical groups and Theorem 10.1 in \cite{LM} for exceptional groups of Lie type that $G$ always possesses a pair of strongly orthogonal conjugacy classes $(C_1,C_2)$, except possibly for ${\rm O}_{2n}^+(q),$ where $q$ is a power of some prime $p$ and $n\ge 4$ is an even integer. Thus if $G$ is not an exception, there is a unique non--trivial character which does not vanish on neither $C_1$ nor $C_2$. Since this character is non-linear it has at least one vanishing conjugacy class $C_3$, so we are done.

We now deal with the  remaining case.
	
Let $q=p^f$ be a power of a prime $p$ and let $\tilde{G}=\tilde{G}(q)$ be the group of fixed points of a simple algebraic group of simply connected type $D_{n}$ with $n\ge 4$ an even integer under a Frobenius map $F$. So $G=\tilde{G}/\Center(\tilde{G})\cong {\rm O}_{2n}^+(q)$. Notice that if $\tilde{G}$ satisfies the conclusion of Theorem \ref{th:simple} then so does $G$.
	
By \cite[3A]{Malle}, $\tilde{G}$ has two maximal tori $T_1$ and $T_2$ of order $(q^{n-1}+1)(q+1)$ and $(q^{n-1}-1)(q-1)$, respectively. If $G\cong {\rm O}_8^+(2)$, then we can take $C_1,C_2$ and $C_3$ to be conjugacy classes of elements labeled by $7A,8A$ and $9A$, respectively by using GAP \cite{GAP}. So, we can assume that $(n,q)\neq (4,2)$ and recall that $n\ge 4$ is even. It follows that the primitive prime divisors $\ell_1=\ell(2n-2)$ and $\ell_2=\ell(n-1)$ exist. Let $C_1$ and $C_2$ be the conjugacy classes of elements of order $\ell_1$ and $\ell_2$, respectively. By \cite[3G]{Malle}, if $\chi$ is a non--trivial irreducible character of $G$ that does not vanish on both $C_1$ and $C_2$, then $\chi$ is a Steinberg character of degree $|G|_p$ or $\chi$ is one of the two unipotent characters $\psi_1$ and $\psi_2$, labeled by the symbols
	
\[\begin{pmatrix} n-1\\1  \end{pmatrix} \text{ and } \begin{pmatrix} 0&\dots&n-3&n-1\\1&\dots&n-2&n-1  \end{pmatrix}.\]
Let $\ell_3=\ell(2n-4)$. Since $n\ge 4$ is even, $\ell_3$ always exists. Using \cite[13.8]{Carter} or \cite[3G]{Malle}, we can see that both $\psi_1$ and $\psi_2$ have $\ell_3$-defect zero.
Note that if $\chi$ is an irreducible character of $G$ having $r$-defect zero for some prime $r$, that is, $|G|/\chi(1)$ is coprime to $r$, then $\chi$ vanishes on every $r$-singular element of $G$, i.e., elements whose orders are divisible by $r$. To complete the proof, we need to show that $G$ contains an element  whose  order is divisible by $p\ell_3$ and we can take $C_3$ to be the conjugacy class of $G$ containing such elements.
	
It follows from  \cite[Tables 8.50, 8.66]{BHRD}  for $n\in \{4,6\}$ and \cite[Proposition 4.1.6]{KL} for $n\ge 8$ that $G$ contains a subgroup of the form $\Omega_4^-(q)\times \Omega_{2n-4}^-(q)$. Now the first factor $\Omega_4^-(q)$ 
contains an element of order $p$ and $\Omega_{2n-4}^-(q)$ contains an element of order $\ell_3$ since $q^{n-2}+1$ dividing $|\Omega_{2n-4}^-(q)|$. Thus $\Omega_4^-(q)\times \Omega_{2n-4}^-(q)$ and hence $G$ possesses an element of order $p\ell_3$. The proof is now complete.
\end{proof}

\section{Solvable groups: proofs of Theorems~\ref{th:hn_pgroup}, ~\ref{th:abelian-by-metanilpotent}, and ~\ref{th:Fittingheight}}

In this section, we consider several results regarding solvable groups.  The following lemma will be a key tool for the results of this section.

\begin{lem}\label{alex1}
Let $G$ be a group, and let $K$ be a proper normal subgroup of $G$ such that $G/K$ is nilpotent. Also, let $\chi$ be an irreducible character of $G$ such that $\chi_K$ is not irreducible. Then there exists a proper normal subgroup $L$ of $G$ such that $L$ contains $K$ and, for every $g\in G\setminus L$, we have $\chi(g)=0$.
\end{lem}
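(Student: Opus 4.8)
The plan is to use the standard structure of induction/restriction for characters that become reducible on a normal subgroup, exploiting that $G/K$ is nilpotent (hence every maximal subgroup containing $K$ is normal of prime index). First I would set $\chi_K = e\sum_{i=1}^t \vartheta_i$ where $\vartheta_1,\dots,\vartheta_t$ are the distinct $G$-conjugates of an irreducible constituent $\vartheta$ of $\chi_K$, and note $t > 1$ since $\chi_K$ is reducible. Let $T = I_G(\vartheta)$ be the inertia group; then $K \le T \lneq G$, so $T$ is contained in some maximal subgroup $M$ of $G$ with $K \le M$. Since $G/K$ is nilpotent, $M \trianglelefteq G$ and $G/M$ is cyclic of prime order $p$. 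I would take $L = M$, which is proper and contains $K$; the claim to prove is that $\chi$ vanishes off $M$.

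The key step is: by Clifford theory, since $T \le M$, the character $\chi$ is induced from $M$ — more precisely, writing $\chi = \psi^G$ for a suitable irreducible constituent $\psi$ of $\chi_M$ lying over $\vartheta$ (Clifford correspondence applied through the chain $T \le M \le G$: induction from $T$ to $G$ factors through $M$, and $\chi$ is induced from an irreducible character of $M$). Once $\chi = \psi^G$ with $M \trianglelefteq G$, the induced character formula gives, for $g \in G \setminus M$, that $\chi(g) = \sum_{x M \in G/M,\ x^{-1}gx \in M} \psi(x^{-1}gx) = 0$ because $g \notin M$ and $M$ is normal, so no conjugate of $g$ lies in $M$. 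That yields $\chi(g) = 0$ for all $g \in G \setminus L$, as desired.

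I should double-check the one delicate point: that $\chi$ really is induced from a proper subgroup, i.e. that $\chi_K$ being reducible forces $T \lneq G$. This is exactly the statement that $\vartheta$ is not $G$-invariant; if it were, then $\chi_K = e\vartheta$ would be (up to the multiplicity $e$) homogeneous, but it need not be irreducible in that case — so I need the hypothesis more carefully. Actually $\chi_K$ reducible does allow $\vartheta$ to be $G$-invariant (with $e \ge 2$). So the argument must instead go: if $\vartheta$ is $G$-invariant, I would descend to a different normal subgroup. Here is the fix using nilpotency of $G/K$: if $\vartheta$ extends to $G$ (or handle via projective representations), the characters of $G$ over $\vartheta$ are $\hat\vartheta \cdot \lambda$ for $\lambda \in \Irr(G/K)$... hmm, this needs $\vartheta$ to extend, which holds e.g. when $G/K$ is cyclic but not in general. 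The cleaner route: among all normal subgroups $N$ with $K \le N \lneq G$ such that $\chi_N$ is reducible, pick one maximal subject to these constraints — no, rather: since $\chi_K$ is reducible and $G/K$ nilpotent, walk up a chief series from $K$; there is a first normal subgroup $N \trianglerighteq K$ with $\chi_N$ irreducible (possibly $N = G$), and its predecessor $N_0 \trianglelefteq G$ (with $K \le N_0$, $N/N_0$ of prime order since $G/K$ nilpotent) has $\chi_{N_0}$ reducible while $\chi_N$ irreducible.

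Then the main obstacle reduces to the classical lemma: if $M \trianglelefteq G$ with $G/M$ cyclic of prime order and $\eta \in \Irr(G)$ has $\eta_M$ reducible, then $\eta = \mu^G$ for some $\mu \in \Irr(M)$, hence $\eta$ vanishes off $M$. I would apply this with $M = N_0$, $G$ replaced by $N$, getting $\chi_N$ vanishes on $N \setminus N_0$. To propagate vanishing from $N \setminus N_0$ up to all of $G \setminus N_0$ — wait, that is false in general; I only need $L$ with $\chi$ vanishing off $L$. Take $L = N_0$ if $N = G$; otherwise $\chi_N$ irreducible means I do not yet control $\chi$ on $G$. So the right choice is: let $N$ be the smallest normal subgroup containing $K$ on which $\chi$ restricts irreducibly — if $N = G$ take the predecessor $L = N_0$ and the prime-index lemma applied to $G \trianglerighteq N_0$ directly gives $\chi = \mu^G$ vanishing off $N_0 = L$. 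If $N \lneq G$, then actually $\chi_N$ irreducible with $N \trianglelefteq G$ and $G/N$ nilpotent nontrivial reduces us to the case "$\chi_N$ irreducible" which is outside our hypothesis — but we can instead just apply the prime-index lemma at the top: let $M_0 \trianglelefteq G$ be maximal of prime index with $K \le M_0$; if $\chi_{M_0}$ is reducible we are done with $L = M_0$ by the lemma, and if $\chi_{M_0}$ is irreducible we recurse inside $M_0$ with $K$, which terminates because $\chi_K$ is reducible. I expect verifying this descent terminates correctly and assembling the prime-index lemma (which itself is \cite[Ch. 6]{Isaacs}-style Clifford theory) to be the only real content; the vanishing-off-a-normal-subgroup conclusion is then immediate from the induced character formula.
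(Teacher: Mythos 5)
Your handling of the non-homogeneous case is correct and coincides with the paper's: when the inertia group $I_G(\vartheta)$ is proper, $\chi$ is induced from a maximal --- hence normal, by nilpotency of $G/K$ --- subgroup containing it, and so vanishes off that subgroup. The genuine gap is in the homogeneous case $\chi_K=e\vartheta$ with $e>1$ and $\vartheta$ being $G$-invariant, which you correctly identify as the delicate point but do not resolve. Your final descent --- pick a maximal normal $M_0\supseteq K$ of prime index, conclude via the prime-index lemma if $\chi_{M_0}$ is reducible, and otherwise ``recurse inside $M_0$'' --- does not prove the statement. In the branch where $\chi_{M_0}$ is irreducible, the recursive call applied to $(M_0,K,\chi_{M_0})$ only produces a proper normal subgroup $L_0$ of $M_0$ with $\chi$ vanishing on $M_0\setminus L_0$; it gives no control over the values of $\chi$ on $G\setminus M_0$ (where $\chi$ is not induced from $M_0$ and need not vanish), and $L_0$ need not be normal in $G$. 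So the descent terminates with a conclusion strictly weaker than the lemma: vanishing on $M_i\setminus M_{i+1}$ for some chain of subgroups, rather than on $G\setminus L$ for a single normal $L$. The chief-series variant fails for the same reason, as you yourself half-observe (``I do not yet control $\chi$ on $G$'').

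What is actually needed in the homogeneous case --- and what the paper does --- is a reduction to $G/K$ via projective representations: a representation $R$ affording $\chi$ decomposes as $R(g)=P(g)\otimes Q(g)$ with $P_K=I_e$ (Huppert, Theorem 21.2), so $P$ is a degree-$e$ projective representation of $G/K$; one lifts $P$ to an ordinary non-linear irreducible character $\xi$ of a Schur representation group $\Gamma$ of $G/K$, which is again nilpotent; monomiality of nilpotent groups together with normality of their maximal subgroups shows $\xi$ is induced from, hence vanishes off, a proper normal subgroup $\Lambda$ of $\Gamma$ containing the Schur multiplier; pulling $\Lambda$ back gives the desired $L\supseteq K$ with $\mathrm{tr}(P(g))=0$, hence $\chi(g)=0$, for all $g\in G\setminus L$. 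A posteriori this argument shows that in the homogeneous case some maximal normal $M_0\supseteq K$ does satisfy that $\chi_{M_0}$ is reducible (since $\chi$ vanishes on the nonempty set $G\setminus M_0$, forcing $[\chi_{M_0},\chi_{M_0}]=|G:M_0|>1$), so your problematic branch is in fact vacuous --- but establishing that it is vacuous is essentially equivalent to the lemma itself, so it cannot be taken for granted in the proof.
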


\begin{proof}
Consider first the case when $\chi_K$ is not homogeneous. Then, by Clifford's Theory, there exist a proper subgroup $I$ of $G$, with $I \supseteq K$, and $\theta \in \irr I$ such that $\chi = \theta^G$. If $L/K$ is a maximal subgroup of $G/K$ containing $I/K$ then, as $G/K$ is nilpotent, $L/K$ is a (proper) normal subgroup of $G/K$. Hence $L$ is a proper normal subgroup of $G$, and $\chi=(\theta^L)^G$ takes the value $0$ on every element of $G\setminus L$, as wanted.
	
It remains to consider the case when $\chi_K=e\theta$, for a suitable character $\theta \in \irr K$ and a suitable integer $e > 1$. Let $R$ be a representation of $G$ affording $\chi$ and such that $R_K = I_e \otimes T$, where $I_e$ denotes the trivial representation of degree $e$ and $T$ is a representation affording $\theta$. By Theorem~21.2 of \cite{Hu}, there exist irreducible projective representations $P$ and $Q$ of $G$ such that, for every $g\in G$, we have $R(g) = P(g) \otimes Q(g)$ and $P_K = I_e$. Now, the composition map given by $P$ followed by the natural homomorphism of ${\rm{GL}}_e (\C)$ onto ${\rm{PGL}}_e(\C)$ is a homomorphism whose kernel contains $K$, therefore it is well defined on $G/K$; in other words, $P$ can be regarded as a projective representation of $G/K$ (up to multiplying it by a suitable map from $G$ to $\C^{\times}$). 

Also, denoting by $M$ the Schur multiplier of $G/K$ and by $\Gamma$ a Schur representation group for $G/K$, we can find an ordinary irreducible representation $X$ of $\Gamma$ such that, for every $\gamma \in \Gamma$, we have $X(\gamma) = P(\gamma M) \mu(\gamma)$ for a suitable map $\mu : \Gamma \rightarrow \C^{\times}$. Denote by $\xi \in \irr{\Gamma}$ the (non-linear) character afforded by $X$: the nilpotency of $\Gamma$ guarantees that $\xi$ is induced by a linear character of $\Gamma$, hence it is induced from a proper subgroup of $\Gamma$ and, as in the paragraph above, it is in fact induced from a proper normal subgroup $\Lambda$ of $\Gamma$; in particular, $\xi$  takes the value $0$ on every element of $\Gamma\setminus\Lambda$ and, as an immediate consequence, we see that $\Lambda$ contains the central subgroup $M$. Finally, let $L$ be the (proper, normal) subgroup of $G$ such that $L/K=\Lambda/M$: if $g$ lies in $G\setminus L$ and $\gamma\in\Gamma$ is such that $gK=\gamma M$ (hence $\gamma\not\in\Lambda$), then we get \[0=\xi(\gamma)={\rm{tr}}(X(\gamma))={\rm{tr}}(P(g))\mu(\gamma).\] It follows that ${\rm{tr}}(P(g))=0$, thus $\chi(g)={\rm{tr}}(P(g))\cdot{\rm{tr}}(Q(g))=0$. The proof is complete.  
\end{proof}

We are now in a position to prove Theorem~\ref{th:hn_pgroup}: every non-trivial solvable group $G$ satisfies $\HH_{{\rm{r}}(G)}$. For the definition of ${\rm{r}}(G)$, we refer the reader to the paragraph of the Introduction preceding the statement of Theorem~\ref{th:hn_pgroup}.

\begin{proof}[{Proof of Theorem $\ref{th:hn_pgroup}$}] Since every abelian group (vacuously) satisfies $\HH_1$ just by choosing the conjugacy class of the identity element, we can assume that $G$ is non-abelian. Denoting by $h$ the Fitting height of $G$, let $K=\mathbf{F}_{h-1}(G)$ be the penultimate term of the Fitting series of $G$; thus, $G/K$ is a nilpotent group. Also, set $r_h={\rm r}_h(G)$ and, adopting the bar convention for the factor group $G/K$, let $\{g_1,\ldots,g_{r_h}\}$ be a subset of $G$ such that $\{\overline{g_1},\ldots,\overline{g_{r_h}}\}$ is a generating set of $G/K$.

Consider an irreducible character $\chi$ of $G$, and assume that $\chi_K$ is not irreducible; then, by Lemma~\ref{alex1}, there exists a proper normal subgroup $L$ of $G$, containing $K$, such that $\chi$ vanishes off $L$. As $\overline{L}$ is a proper subgroup of $\overline{G}$, clearly there exists $i\in\{1,\ldots,r_h\}$ such that $\overline{L}$ does not contain $\overline{g_i}$, hence $g_i$ lies in $G\setminus L$ and $\chi(g_i)=0$. On the other hand, assume now that $\chi\in\irr G$ is non-linear and restricts irreducibly to $K$ (note that this implies $K\neq 1$); arguing by induction on the order of the group and observing that $\{\mathbf{F}_{1}(G),\ldots,\mathbf{F}_{h-1}(G)\}$ is the Fitting series of $K$, we have that there exists a set $Y$ of elements of $K$ having size at most ${\rm r}(K)={\rm r}(G)-r_h$ such that $\chi_K$ (hence $\chi$) vanishes on at least an element of $Y$. Considering then the subset $X=Y\cup\{g_1,\ldots,g_{r_h}\}$ of $G$, we see that $|X|\leq{\rm{r}}(G)$ and every non-linear irreducible character of $G$ vanishes on at least an element of $X$, yielding the desired conclusion.
\end{proof}

Next, we derive another immediate consequence of Lemma~\ref{alex1}.

\begin{lem}\label{alex2} 
Let $G$ be a group, and let $K$ be a proper normal subgroup of $G$ such that $G/K$ is nilpotent. If $\chi_1$ and $\chi_2$ are characters in $\irr G$ whose restrictions to $K$ are not irreducible, then there exists $g\in G$ such that $\chi_1(g)=0=\chi_2(g)$. 
\end{lem}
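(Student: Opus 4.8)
The plan is to reduce Lemma~\ref{alex2} to Lemma~\ref{alex1} by finding a single proper normal subgroup of $G$ that ``works'' simultaneously for both characters. Applying Lemma~\ref{alex1} to $\chi_1$ (with the given $K$) yields a proper normal subgroup $L_1\supseteq K$ of $G$ with $\chi_1$ vanishing on every element of $G\setminus L_1$; likewise, applying it to $\chi_2$ yields a proper normal subgroup $L_2\supseteq K$ with $\chi_2$ vanishing on $G\setminus L_2$. It therefore suffices to produce an element $g\in G$ lying outside both $L_1$ and $L_2$, since then $\chi_1(g)=0=\chi_2(g)$ as wanted.

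The key point is that such a $g$ exists precisely because $G/K$ is nilpotent. Indeed, $\overline{L_1}$ and $\overline{L_2}$ are proper subgroups of the nilpotent group $\overline{G}=G/K$. A finite nilpotent group cannot be written as the union of two proper subgroups: if it were the union of two proper subgroups it would in fact be the union of two proper \emph{normal} subgroups of index dividing the order, but more simply, a group equal to the union of two proper subgroups must have both subgroups of index $2$ and their intersection of index $2$ as well, forcing the Klein-four quotient $C_2\times C_2$ — whereas a nilpotent group has a unique subgroup of index $p$ for each prime $p$ only when... — cleaner still: $\overline{G}$ has a quotient that is a nontrivial elementary abelian $p$-group for some prime $p$ (its Frattini quotient), and an elementary abelian group, hence $\overline{G}$ itself, is never the union of two proper subgroups. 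Consequently $\overline{L_1}\cup\overline{L_2}\neq\overline{G}$, so we may pick $\overline{g}\in\overline{G}\setminus(\overline{L_1}\cup\overline{L_2})$, and any preimage $g\in G$ lies in $G\setminus L_1$ and in $G\setminus L_2$, which completes the argument.

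There is essentially no obstacle here: the lemma is, as the text signals, an ``immediate consequence'' of Lemma~\ref{alex1}. The only thing to be slightly careful about is to justify cleanly that a nilpotent group is not the union of two proper subgroups — I would do this via the Frattini (or commutator) quotient being a nontrivial product of elementary abelian groups and the elementary observation that no group is the union of two proper subgroups unless it surjects onto $C_2\times C_2$, which a group with cyclic $p$-Sylow-free... — to keep it crisp I would simply cite the standard fact that a finite group is never the union of two proper subgroups, or deduce it from the fact that $\overline G$ has a proper subgroup of index at most $2$ containing $\overline{L_1}$, etc. One should also double-check that Lemma~\ref{alex1} applies: it requires $\chi_i|_K$ not irreducible, which is exactly the hypothesis. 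Thus the whole proof is three lines once Lemma~\ref{alex1} is in hand.
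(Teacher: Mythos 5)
Your proposal is correct and follows essentially the same route as the paper: apply Lemma~\ref{alex1} to each $\chi_i$ to get proper normal subgroups $L_1,L_2$, then pick $g\in G\setminus(L_1\cup L_2)$. Your detour through nilpotency and the Frattini quotient to justify $L_1\cup L_2\neq G$ is unnecessary, since no group is ever the union of two proper subgroups (if $a\in L_1\setminus L_2$ and $b\in L_2\setminus L_1$, then $ab$ lies in neither), which is exactly the elementary fact the paper invokes.
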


\begin{proof} 
By Lemma~\ref{alex1}, we can find proper normal subgroups $L_1$, $L_2$ of $G$ such that $\chi_1$ vanishes on every element of $G\setminus L_1$ and $\chi_2$ vanishes on every element of $G\setminus L_2$. Taking into account that $L_1\cup L_2$ is properly contained in $G$, an element $g$ which satisfies the desired conclusion can be chosen as any element of $G\setminus (L_1\cup L_2)$.
\end{proof}



We are now ready to prove Theorem~\ref{th:abelian-by-metanilpotent}. If $\chi_1$, $\chi_2$ and $\chi_3$ are non-linear irreducible characters of an abelian-by-metanilpotent group $G$, then two of them have a common zero (in other words, the independence number of the graph $\Gamma_v(G)$ is at most $2$).

\begin{proof}[{Proof of Theorem~$\ref{th:abelian-by-metanilpotent}$}] 
Let $K$, $N$ be normal subgroups of $G$ such that $N\subseteq K$, $N$ is abelian, and the factor groups $K/N$, $G/K$ are nilpotent. Also, let $X$ be a set of non-linear irreducible characters of $G$ such that $|X|=3$. Since $N$ is abelian, every character in $X$ does not restrict irreducibly to $N$ because it is non-linear. On the other hand, if two among these characters do not restrict irreducibly to $K$, then the conclusion follows from Lemma~\ref{alex2}. We conclude that at least two characters in $X$ restrict irreducibly to $K$, but not to $N$, and another application of Lemma \ref{alex2} easily yields the conclusion.
\end{proof}

Finally, we prove Theorem~\ref{th:Fittingheight}. Let $G$ be a solvable group and let $h$ be the Fitting height of $G$. Then, for any $h+1$ non-linear irreducible characters of $G$, two of them have a common zero. Again, this can be rephrased by saying that the independence number of $\Gamma_v(G)$ is at most $h$.   As mentioned in the Introduction, a similar statement does not hold for the graph $\Delta_v(G)$, since there are solvable groups $G$ with a prescribed value for the Fitting height but whose graph $\Delta_v(G)$ has an arbitrarily large independence number; this will be discussed in Example~\ref{dihedral}.


\begin{proof}[{Proof of Theorem~$\ref{th:Fittingheight}$}] 
Considering a set $X$ of non-linear irreducible characters of $G$ such that $|X|=h+1$, we argue by induction on $h$. If $h = 1$, then Lemma~\ref{alex2} applied with $K=1$ (or Theorem~4.1 of \cite{HMM}) yields the conclusion. So, let us assume $h \geq 2$, and that the claim holds for groups having Fitting height $h - 1$. Denoting by $K$ the penultimate term of the Fitting series of $G$, by Lemma~\ref{alex2} the desired conclusion follows if there are two characters in $X$ that do not restrict irreducibly to $K$; therefore we can assume that there exists a subset $X_0$ of $X$, with $|X_0|=h$, such that all restrictions to $K$ of the characters in $X_0$ are irreducible. Since the Fitting height of $K$ is $h-1$, by the inductive hypothesis there exist $\chi_1$, $\chi_2$ in $X_0$ and an element $g\in K$ such that $\chi_1(g)=0=\chi_2(g)$, which concludes the proof.
\end{proof}

\begin{ex}\label{dihedral}
Let $G_{n}$ denote the dihedral group ${\rm{D}}_{2^{n+1}}$ of order $2^{n+1}$, for $n\geq 2$: we claim  that (the Fitting height of every group $G_n$ is $1$ and) the independence number of $\Delta_v(G_n)$ is $n-1$.

In fact, let $x$ be a generator of the group of rotations $R\subseteq G_n$ (thus, $x$ is an element of order $2^n$) and let $\epsilon$ denote a primitive $2^n$-th root of unity. The irreducible characters of $R$ can be parametrized as the group homomorphisms $\theta_i:R\rightarrow\C^\times$ defined by $\theta_i(x)=\epsilon^i$, for $i\in\{0,\ldots,2^n-1\}$: among these, $\theta_0$ and $\theta_{2^{n-1}}$ are $G_n$-invariant, whereas all the others induce irreducibly to $G_n$ (note that, for every $y\in G_n\setminus R$ and for every $i\in\{0,\ldots,2^n-1\}$, we have $\theta_i^y=\overline{\theta_i}=\theta_{2^n-i}$). It is then easy to see that the non-linear irreducible characters of $G_n$ are the characters in the set $\{\chi_i=\theta_i^G\mid i\in\{1,\ldots,2^{n-1}-1\}\}$.

Now, the character $\chi_i$ vanishes at $x^j\in R$ if and only if $\theta_i(x^j)+\overline{\theta_i(x^j)}=\epsilon^{ij}+\overline{\epsilon^{ij}}=0$, and this holds precisely when $ij\equiv 2^{n-2}$ (mod $2^{n-1}$). Defining $d$ to be the $2$-part of $i$ (note that $d$ is at most $2^{n-2}$) and $w\in\{0,\ldots,2^n-1\}$ to be the unique (odd) number such that $w\cdot (i/d)\equiv 1$ (mod $2^{n-1}/d$), the solutions in the variable $j\in\mathbb{Z}$ of the congruence $$ij\equiv 2^{n-2} \text{ (mod }2^{n-1})$$ are precisely the numbers of the form $$j=\frac{2^{n-2}}{d}\cdot(2k+w)\quad {\text{ for }}k\in\mathbb{Z},$$ hence the $2$-part of $j$ is $2^{n-2}/d$ and, consequently, the order of $x^j$ is $4d$. In particular, the order of any $x^j\in R$ such that $\chi_i(x^j)=0$ is a $2$-power larger than $2$ which is uniquely determined by $i$. Our conclusion so far is that, for every non-linear irreducible character $\chi$ of $G_{n}$, the zeros of $\chi$ lying inside $R$ have all the same order. Finally, recalling that every non-central element of a nilpotent group is a vanishing element of the group (see \cite[Theorem~B]{INW}) and setting $x_t$ to be an element of $R$ having order $2^t$ for $t\in\{2,\ldots, n\}$, the conjugacy classes $x_t^{G_n}$ are (pairwise distinct) vanishing classes of $G_n$ such that no irreducible character of $G_n$ vanishes on two of them. In other words, we constructed an independent set of $\Delta_v(G_n)$ of size $n-1$.
\end{ex}

\section{Final remarks}

We conclude this paper with some remarks, beginning with the following observation already mentioned in the Introduction.

\begin{rem}\label{Directproduct}
Given a positive integer $k$ and a group $G\in {\mathcal H}_k$, it is straightforward to see that any factor group of $G$ satisfies ${\mathcal H}_k$ as well. It is also not difficult to check that the class of groups satisfying property ${\mathcal H}_k$ is closed under taking direct products.

In fact, let $X$ and $Y$ be groups in ${\mathcal H}_k$;  by definition, there exist subsets $\{x_1,\ldots,x_k\}$ of $X$ and $\{y_1,\ldots,y_k\}$ of $Y$ such that every non-linear irreducible character of $X$ (of $Y$, respectively) vanishes on at least one of the $x_i$ (of the $y_i$, respectively). Now, consider the subset $
\{g_1=(x_1,y_1),\ldots,g_k=(x_k,y_k)\}$ of the direct product $X\times Y$, and let $\chi$ be a non-linear irreducible character of $X\times Y$; hence there exist $\mu\in\irr X$ and $\nu\in\irr Y$ such that $\chi=\mu\times \nu$, and at least one among $\mu$, $\nu$ is non-linear. Assuming that $\mu(1)>1$ and that $j\in\{1,\ldots, k\}$ satisfies $\mu(x_j)=0$, we have $\chi(g_j)=\mu(x_j)\nu(y_j)=0$, which proves the claim.
\end{rem}

In Theorem~2 we have already seen that every symmetric group satisfies $\mathcal{H}_2$. It can be checked that other examples of non-solvable groups lying in $\mathcal{H}_2$ are
${\rm Suz}.2$, $3.{\rm Suz}.2$, $\Alt_6.2_3$, $3.\Alt_6.2_3$, $\Alt_6.2^2$, $3.\Alt_6.2^2$, and in all these groups we can actually find two elements \emph{of prime-power order} as representatives for the relevant conjugacy classes. 
In the following remark, we have a closer look at the latter situation. 

\begin{rem} 
If the group $G$ has two conjugacy classes $C_1$ and $C_2$ of elements of prime-power orders, say for the distinct primes $p$ and $q$, such that every non-linear irreducible character of $G$ vanishes on either $C_1$ or $C_2$, then by  \cite[Remark 4.1]{DPSS} the degree of every non-linear irreducible character of $G$ is divisible by either $p$ or $q$. This can be regarded as a variation of the property that every non-linear irreducible character of a group $G$ has a degree divisible by a given prime $p$ (under this hypothesis, a celebrated theorem by J. Thompson proves that $G$ has a normal $p$-complement).

Now, setting $\pi=\{p, q\}$, assuming that $G$ is a $\pi$-separable group and denoting by $H$ a Hall $\pi$-subgroup of $G$, a result by G. Navarro, N. Rizo, and the fourth author shows that every non-linear irreducible character of $G$ has a degree divisible by either $p$ or $q$ if and only if $G = G' \norm G H$ and $G' \cap \norm G H = H'$ (see  \cite[Theorem~A]{NRS} applied to the complement $\pi'$ of $\pi=\{p,q\}$ and to $\emptyset$). 
Also, in \cite[Theorem~3.6]{GSFV} the authors prove that for $G=\GL_n^\epsilon(r^a)$, $\pi=\{p,q\}$ as above and $\epsilon\in\{\pm 1\}$, every non-linear irreducible character of $G$ has a degree divisible by either $p$ or $q$ if and only if there is some $k\ge 0$ such that $(r,n)=(p,q^k)$ and $q\mid (p^a-\epsilon)$, up to reordering $p$ and $q$. 

However, we note that if every non-linear irreducible character of a group $G$ has a degree divisible by either $p$ or $q$ (for $p$ and $q$ suitable distinct primes), then it is not true in general that there exist two conjugacy classes $C_1$, $C_2$ of elements of $p$-power order and, respectively, $q$-power order, such that every non-linear irreducible character of $G$ vanishes on either $C_1$ or $C_2$;  as an example, we can consider $G = \GL_2(5)$ with $p=2$ and $q=5$.

\end{rem}

We close with another graph interpretation of zeros of characters.

\begin{rem}
As mentioned in the Introduction, some properties of zeros of characters can be conveniently rephrased in a graph theoretic language, and, given a group $G$, we already discussed some features of the graphs $\Gamma_v(G)$ and $\Delta_v(G)$ in this respect. 

Another graph, which ``summarizes" $\Gamma_v(G)$ and $\Delta_v(G)$, can be be defined as follows: consider the (simple, undirected) bipartite graph $\Theta(G)$ whose vertices are the non-linear irreducible characters and the non-central conjugacy classes of $G$; two conjugacy classes as well as two characters are never adjacent in $\Theta(G)$, whereas a character $\chi$ and a conjugacy class $g^G$ are adjacent if and only if $\chi(g)=0$. 

Note that, by Burnside's Theorem, no character can be an isolated vertex of this graph. On the other hand there can be conjugacy classes that are isolated vertices of $\Theta(G)$ (as one can already see in the symmetric group ${\rm S}_3$, considering the conjugacy class of an element of order $3$): these are the non-central conjugacy classes of the \emph{non-vanishing elements}, originally introduced  in \cite{INW} and then studied by many researchers. We also note that, removing the isolated vertices from the vertex set of $\Theta(G)$, we obtain a bipartite graph whose vertex set is precisely the union of the vertex sets of $\Gamma_v(G)$ and $\Delta_v(G)$ and that is of course very much related to these graphs.

Recall that an element $g\in G$ is called a Camina element (or anticentral element) (\cite[Lemma~2.1]{Lewis} or \cite[Proposition~1.1]{Ladisch}) if $|\Centralizer_G(g)|=|G:G'|$ or, equivalently, if every non-linear irreducible character of $G$ vanishes at $g$. In view of the above discussion, this can be expressed by saying that the vertex $g^G$ is adjacent in $\Theta(G)$ to all the vertices of the other part in the bipartition, i.e., to all the non-linear irreducible characters of $G$; in other words, $g^G$ is a vertex having the maximum possible degree in $\Theta(G)$. It is known that groups with such an element are solvable (see \cite[Theorem~4.3]{Ladisch}). 

The dual case for Camina elements is the case where a character vanishes on all noncentral elements.  Such a character is known to be fully-ramified with respect to the center, and a group having such a character is called a central type group.  DeMeyer and Janusz have proved that the Sylow subgroups of central type groups have central type \cite{DeJa} and building on the work of Gagola \cite{gag1}, Howlett and Isaacs prove that central type groups are solvable \cite{HI}.

In a paper in preparation \cite{inprep}, we study groups that are one step away from the groups in the previous two paragraphs.  In particular, we consider groups $G$ with an element $g \in G$ where all but one nonlinear irreducible character $G$ vanishes on $g$.  We also consider a group $G$ with an irreducible character that vanishes on all but one noncentral conjugacy class of $G$.  We will see that these results have a common generalization.
\end{rem}

\end{document}